\def\BibTeX{{\rm B\kern-.05em{\sc i\kern-.025em b}\kern-.08em
    T\kern-.1667em\lower.7ex\hbox{E}\kern-.125emX}}
    \DeclareMathOperator{\tr}{tr}
  \newcommand{\figref}[1]{Fig.~\protect\ref{#1}}
\newcommand{\bPhi}{\boldsymbol{\Phi}}
\newcommand{\bSig}{\boldsymbol{\Sigma}}
\newcommand{\bg}{{\bf g}}
      \newcommand{\bx}{{\bf x}}
          \newcommand{\bw}{{\bf w}}
           \newcommand{\bI}{{\bf I}}
       \newcommand{\bC}{{\bf C}}
                            \newcommand{\pto}{\overset{P}\longrightarrow }
\long\def\comment#1{}
\DeclareMathOperator*{\argmin}{arg\,min}
\newfont{\bbb}{msbm10 scaled 700}
\newfont{\bb}{msbm10 scaled 1100}
\newcommand{\av}{{\bf a}}
\newcommand{\gv}{{\bf g}}
\newcommand{\hv}{{\bf h}}
\newcommand{\rv}{{\bf r}}
\newcommand{\sv}{{\bf s}}
\newcommand{\uv}{{\bf u}}
\newcommand{\wv}{{\bf w}}
\newcommand{\vv}{{\bf v}}
\newcommand{\xv}{{\bf x}}
\newcommand{\yv}{{\bf y}}
\newcommand{\zv}{{\bf z}}
\newcommand{\Am}{{\bf A}}
\newcommand{\Cm}{{\bf C}}
\newcommand{\Gm}{{\bf G}}
\newcommand{\Hm}{{\bf H}}
\newcommand{\Id}{{\bf I}}
\newcommand{\Um}{{\bf U}}
\newcommand{\Gammam}{\hbox{\boldmath$\Gamma$}}
\newcommand{\Sigmam}{\hbox{\boldmath$\Sigma$}}
\newtheorem{theorem}{Theorem}
\newtheorem{remark}{Remark}
\newtheorem{prop}{Proposition}
\begin{document}
\makeatother
\def\x{{\mathbf x}}
\def\L{{\cal L}}

\title{
Precise Error Analysis of the LASSO under Correlated Designs
}
\author{\IEEEauthorblockN{Ayed M. Alrashdi, Houssem Sifaou, Abla Kammoun, Mohamed-Slim Alouini and Tareq Y. Al-Naffouri}
\IEEEauthorblockA{
{Electrical Engineering Department, University of Hail, Hail, Saudi Arabia} \\
{CEMSE Division, King Abdullah University of Science and Technology (KAUST),} 
Thuwal, Saudi Arabia\\
Emails: \{ayed.alrashdi, houssem.sifaou, abla.kammoun, slim.alouini, tareq.alnaffouri\}@kaust.edu.sa
}
}

\maketitle
\begin{abstract}
In this paper, we consider the problem of recovering a sparse signal from noisy linear measurements using the so called LASSO formulation. We assume a correlated Gaussian design matrix with additive Gaussian noise.
We precisely analyze the high dimensional asymptotic performance of the LASSO under correlated design matrices using the Convex Gaussian Min-max Theorem (CGMT). We define appropriate performance measures such as the mean-square error (MSE), probability of support recovery, element error rate (EER) and cosine similarity.
Numerical simulations are presented to validate the derived theoretical results.
\end{abstract}

\begin{IEEEkeywords}
LASSO, MSE, element error rate, probability of support recovery, cosine similarity, correlated designs, asymptotic performance
\end{IEEEkeywords}
\section{Introduction}
\label{sec:intro}

The LASSO is one of the most celebrated methods in statistics and signal processing \cite{tibshirani1996regression}. Given a noisy linear measurments $
\yv = \Am \xv_0 + \zv,
$
it recovers the unknown $k$-sparse signal $\xv_0 \in \mathbb{R}^n$ by solving the following optimization problem:
\begin{subequations}\label{eq:corr-LASSO}
\begin{align}
&\widehat{\xv}: = {\rm{arg}} \min_{\xv }  \| \yv - \Am \xv \|_2^2+ \lambda \| \xv \|_1,
\end{align}
\end{subequations}
where $\Am \in \mathbb{R}^{m \times n}$ is the design (measurement) matrix, $\zv \in \mathbb{R}^m$ is the noise vector that has iid entries $\mathcal{N}(0,\sigma^2)$, $\lambda > 0$ is the regularization parameter, $\| \cdot \|_2$ denotes the $\ell_2$-norm of a vector, and $\| \cdot\|_1$ represents its $\ell_1$-norm.

The asymptotic performance of the LASSO has been recently extensively studied in many works. One approach that is based on the Approximate Massage Passing (AMP) framework was used in series of papers to sudy the LASSO under the assumption of iid design matrix \cite{bayati2011lasso,bayati2011dynamics,donoho2009message}. Another approach used the Convex Gaussian min-max Theorem (CGMT) to derive sharp performance garantees of the LASSO for iid design matrices \cite{thrampoulidis2018precise,atitallah2017box,stojnic2013framework,stojnic2010recovery,thrampoulidis2015lasso}. In addition, in \cite{alrashdi2017precise,alrashdi2019precise} the LASSO was analyzed for imperfect designs. In many practical situations, the design matrix has correlated entries \cite{shin2006capacity} so it is important to take into account correlations in the analysis. Very recently, \cite{alrashdi2020box} used the CGMT farmework to analyze the Box-Least Squares decoder under the presence of correlations. To the best of our knowledge, the precise error analysis of the LASSO under correlated designs has not been explicitly derived in this contex before.

To close this gap, this paper derives precise aysmptotic error analysis of the LASSO with correlated Gaussian design matrix. In particular, we provide asymptotic expressions of the mean squared error (MSE) of the LASSO. In addition, we study other interseting performance measures such as the probability of support recovery, the element error rate (EER)  and the cosine similarity.

\section{ Problem Formulation}\label{a}
\subsection{System Model}\label{avv}
We consider a noisy linear measurements system $\yv = \Am \xv_0 +\zv \in \mathbb{R}^m$.
The unknown signal vector $\xv_0 \in \mathbb{R}^n$ is assumed to be $k$-sparse, i.e., only $k$ of its entries are sampled iid from a distribution $p_{X_0}$  and the remaining entries are zeros. The noise vector $\zv \in \mathbb{R}^m$ is assumed to have iid entries $\mathcal{N}(0,\sigma^2)$.
In this work, we consider a correlated Gaussian design matrix which can be modeled as \cite{adhikary2013joint,mueller2016linear}
$$
\Am = \bSig^{\frac{1}{2}} \Hm,
$$
 where $\bSig \in \mathbb{R}^{m \times m}$ is known Hermitian nonnegative left correlation matrix, satisfying $\frac{1}{m} \text{tr}(\bSig) = \mathcal{O}(1)$, while
 $\Hm \in \mathbb{R}^{m \times n}$ is a Gaussian matrix with iid entries $\mathcal{N}(0,\frac{1}{n})$. 
The analysis is performed when the system dimensions grow simultaneously to infinity ($m,n, k \to \infty$) at fixed rates: $\frac{m}{n} \to \delta \in (0,\infty)$ and $\frac{k}{n} \to \kappa\in (0,1)$. The signal-to-noise ratio $(\rm SNR)$ is assumed to be constant and given as ${\rm{SNR}}= \frac{\kappa}{\sigma^2}$.
 
\subsection{ Performance Metrics}\label{a}
We consider the following performance metrics:\\
\textbf{Mean squared error}: The recovery {mean squared error} (MSE) measures the deviation of $\widehat{\xv}$ from the true signal $\xv_0$. 
Formally, it is defined as
\begin{equation}
{\rm{MSE}} := \frac{1}{n}\| \widehat{\xv}  - \xv_0\|_2^2.
\end{equation}
\textbf{Support Recovery}: In the problem of sparse recovery, a natural measure of performance that is used in many applications 
 is support recovery, which is defined as identifying whether an entry of $\xv_0$ is on the support (i.e., non-zero), or it is off the support (i.e., zero). The decision is based on the LASSO solution $\widehat{\xv}$: we say the $i^{th}$ entry of $\widehat{\xv}$ is on the support if $| \widehat{x}_{i}| \geq \xi$, where $\xi > 0$ is a user-defined hard threshold on the entries of $\widehat{\xv}.$ Formally, let
\begin{subequations}\label{supp}
\begin{align}
\Phi_{\xi,\text{on}}(\widehat{\xv}) := \frac{1}{k} \sum_{i \in S(\xv_0)} \mathbbm{1}_{\{| \widehat{x}_{i}| \geq \xi \}},\\
\Phi_{\xi,\text{off}}(\widehat{\xv}) := \frac{1}{n-k} \sum_{i \notin S(\xv_0)} \mathbbm{1}_{\{| \widehat{x}_{i}| \leq \xi \}},
\end{align}
\end{subequations}
\noindent
where $\mathbbm{1}_{\{\mathcal{A} \}}$ is the indicator function of a set $\mathcal{A}$, and $S(\xv_0)$ is the support of $\xv_0$, i.e., the set of the non-zero entries of $\xv_0$. In Theorem \ref{lasso_on/off}, we precisely predict the \textit{per-entry} rate of successful on-support and off-support recovery.\\
\textbf{Element Error Rate:} We also consider the ad-hoc performance metric that we call (per) Element-Error Rate (EER) that is the opposite of the probability of successful recovery. After hard thresholding the entries of $\widehat{\xv}$ by $\xi$ as before, we define the EER as follows
$$ {\rm{EER}}_\xi := \frac{1}{k} \sum_{i \in S(\xv_0)} \mathbbm{1}_{\{| \widehat{x}_{i}| < \xi \}} + \frac{1}{n-k} \sum_{i \notin S(\xv_0)} \mathbbm{1}_{\{| \widehat{x}_{i}| > \xi \}}.$$
As we can see, this metric can be linked to the support recovery metrics defined before as follows:
\begin{equation}\label{EER_def}
 {\rm{EER}}_\xi = 2 -\Phi_{\xi,\text{on}}(\widehat{\xv}) -\Phi_{\xi,\text{off}}(\widehat{\xv}).
\end{equation}
\textbf{Cosine Similarity:} We define another metric that is widely used in machine learning which is the cosine similarity between $\widehat{\xv}$ and $\xv_0$.\footnote{This performance measure can also be seen as the \textit{correlation} between the estimator $\widehat{\xv}$ and $\xv_0$.} It is defined as
$$
{\cos}(\angle (\widehat{\xv},\xv_0)) :=\frac{\widehat{\xv}^T \xv_0}{\| \widehat{\xv}\|_2 \| \xv_0 \|_2} \in [-1,+1].
$$
Obviously, we seek estimates that maximize similarity (correlation).

The rest of the paper is organized as follows. In Section \ref{sec:main}, we present and discuss the main results of the paper. Numerical results are included in Section \ref{sec:simu}, while a proof outline is given in Section \ref{sec:proof}. Finally, the paper is concluded in Section \ref{sec:conclusion}.
\section{Main Results}\label{sec:main}
In this section, we summarize the asymptotic analysis of the LASSO in \eqref{eq:corr-LASSO} in terms of its MSE, probability of support recovery, EER and cosine similarity. 
We use the standard notation ${\rm{plim}}_{n \to \infty} X_n = X$ to denote that a sequence of random variables $X_n$ converges in probability towards a constant $X$. Define the spectral decomposition of $\bSig$
as $\bSig = \Um \Gammam \Um^T$. Finally, let $Q(\cdot)$ denote the Gaussian $Q$-function associated with the standard normal probability density function (pdf) $\varphi(x) =\frac{1}{\sqrt{2 \pi}} e^{\frac{-x^2}{2}}$ .
\begin{theorem}[MSE of the LASSO]\label{LASSO_mse} \normalfont
Let $\rm MSE$ denote the mean squared error of the LASSO in \eqref{eq:corr-LASSO} for some fixed but unknown $k$-sparse signal $\xv_0$, then in the limit of $m, n, k \to \infty, m/n\to \delta$, and $k/n \to \kappa$, it holds
\begin{equation}\label{mse_exp}
\underset{n \to \infty}{{\rm{plim}}} \ \rm{MSE}= \alpha_\star,
\end{equation}
where $\alpha_\star$ is the unique solution to the following:
\begin{align*}\label{cost_th1}
\min_{\alpha>0} \max_{\beta>0} \sup_{\chi>0}& \ \ \mathcal{D}(\alpha,\beta,\chi):=\frac{1}{n}\sum_{j=1}^{m} \frac{\gamma_j \alpha + \sigma^2}{1 - \gamma_j  \mu(\alpha,\beta) }  
- \left(\frac{\beta^2}{4} \mu(\alpha,\beta) +\frac{\chi}{2} +\frac{\alpha \beta^2}{2 \chi} \right) 
+\frac{\chi}{\alpha} \mathbb{E}_{\underset{Z \sim \mathcal{N}(0,1)}{X_0 \sim p_{X_0}} } \biggr[e \biggr(X_0 + \frac{\alpha \beta }{ \chi} Z ; \frac{ \lambda \alpha}{\chi}\biggl)    \biggl] ,
\end{align*}
\begin{equation}\label{soft_cost}
e(a ; b) =
\begin{cases} 
      b a - \frac{1}{2} b^2  & ,\text{if}  \ a > b \\      
					
			\frac{1}{2} a^2 & ,\text{if} \  |a| \leq b  \\
			
      -ba - \frac{1}{2} b^2  & ,\text{if} \   a < -b,
\end{cases}
\end{equation}
$\gamma_j$ is the $j$-th eigenvalue of the matrix $\bSig$, and $\mu(\alpha,\beta)$ satisfies:
\begin{align*}
\frac{1}{n} \sum_{j=1}^{m} \frac{\alpha + \frac{\sigma^2}{\gamma_j}}{\left(\frac{1}{\gamma_j}- \mu(\alpha,\beta)\right)^2} -\frac{\beta^2}{4} =0.
\end{align*}
\end{theorem}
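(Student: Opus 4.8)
\emph{Primary and auxiliary problems.} The plan is to run the CGMT machinery. First set $\wv:=\widehat\xv-\xv_0$, so that $\yv-\Am\xv=\zv-\bSig^{1/2}\Hm\wv$, and dualize the quadratic loss through $\|\vv\|_2^2=\max_{\uv}\{2\uv^T\vv-\|\uv\|_2^2\}$. Using $\uv^T\bSig^{1/2}\Hm\wv=(\bSig^{1/2}\uv)^T\Hm\wv$ and localizing $\wv$ and $\uv$ to large balls (legitimate since the LASSO objective is coercive), the problem \eqref{eq:corr-LASSO} becomes a Primary Optimization (PO) of the form $\min_{\wv}\max_{\uv}\{(\bSig^{1/2}\uv)^T\Hm\wv+\psi(\wv,\uv)\}$ with $\psi$ convex in $\wv$ and concave in $\uv$. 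Since $\Hm$ has i.i.d.\ $\mathcal N(0,1/n)$ entries, the CGMT replaces $(\bSig^{1/2}\uv)^T\Hm\wv$ by $\tfrac{1}{\sqrt n}\bigl(\|\wv\|_2\,\gv^T\bSig^{1/2}\uv+\|\bSig^{1/2}\uv\|_2\,\hv^T\wv\bigr)$, with $\gv\sim\mathcal N(\zerov,\Id_m)$, $\hv\sim\mathcal N(\zerov,\Id_n)$ independent, giving the Auxiliary Optimization (AO). Because the LASSO is convex, the ``full'' version of the CGMT applies: the optimal cost of the AO, and in particular the value of $\tfrac1n\|\wv\|_2^2$ at its minimizer, concentrate about the corresponding quantities of the PO, so it suffices to analyze the AO.

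\emph{Diagonalization and scalarization.} Write $\bSig=\Um\Gammam\Um^T$ and rotate $\uv\mapsto\Um^T\uv$, $\gv\mapsto\Um^T\gv$, $\zv\mapsto\Um^T\zv$ (rotational invariance preserves the laws of $\gv$ and $\zv$). In the new coordinates the terms containing $\uv$ decouple across indices $j$ with weights $\gamma_j$, while those containing $\wv$ decouple across indices $i$ and see $\uv$ only through the two scalars $\|\wv\|_2$ and $\hv^T\wv$. Introduce scalar unknowns: $\alpha$ pinning $\tfrac1n\|\wv\|_2^2$, a dual scalar $\beta$ from the standard scalarization, and a further scalar $\chi$ obtained from a variational representation of the correlation-induced term $\|\bSig^{1/2}\uv\|_2=\sqrt{\sum_j\gamma_j u_j^2}$ (using $\sqrt t=\min_{\chi>0}\{t/(2\chi)+\chi/2\}$, with a Sion/saddle-point argument to move this past $\max_{\uv}$). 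The inner maximization over $\uv$ is then a separable concave quadratic, solved in closed form per coordinate; summing the optimal values yields the resolvent-type term $\tfrac1n\sum_{j=1}^m\frac{\gamma_j\alpha+\sigma^2}{1-\gamma_j\mu}$, with $\mu$ the scalar conjugate to $\chi$. Dually, the minimization over $\wv$ splits into per-coordinate problems $\min_{w_i}\{(\mathrm{const})\,h_i w_i+(\mathrm{quadratic})+\lambda|x_{0,i}+w_i|\}$; completing the square identifies each with the Moreau envelope of $b|\cdot|$, i.e.\ the function $e(a;b)$ of \eqref{soft_cost}, evaluated at $a=x_{0,i}+\tfrac{\alpha\beta}{\chi}h_i$, $b=\tfrac{\lambda\alpha}{\chi}$. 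The deterministic sum $\tfrac1n\sum_j(\cdot)$ passes through unchanged (the $\gamma_j$ are fixed), while by the law of large numbers $\tfrac1n\sum_i(\cdot)\to\tfrac{\chi}{\alpha}\,\mathbb E\bigl[e\bigl(X_0+\tfrac{\alpha\beta}{\chi}Z;\tfrac{\lambda\alpha}{\chi}\bigr)\bigr]$ under the assumed limiting empirical distribution of the entries of $\xv_0$.

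\emph{Reduction and conclusion.} What remains is a deterministic finite-dimensional program $\min_{\alpha>0}\max_{\beta>0}\sup_{\chi>0}\min_{\mu}\widetilde{\mathcal D}(\alpha,\beta,\chi,\mu)$. For $\mu$ in the admissible range $\{\mu:\gamma_j\mu<1\ \forall j\}$ the map $\mu\mapsto\tfrac1n\sum_j\frac{\gamma_j\alpha+\sigma^2}{1-\gamma_j\mu}-\tfrac{\beta^2}{4}\mu$ is strictly convex, so its minimizer $\mu(\alpha,\beta)$ is the unique root of its derivative, which is exactly $\tfrac1n\sum_j\frac{\alpha+\sigma^2/\gamma_j}{(1/\gamma_j-\mu)^2}-\tfrac{\beta^2}{4}=0$; substituting back produces $\mathcal D(\alpha,\beta,\chi)$ and (after a minimax interchange to arrange the scalar optimizations as stated) the reduced problem $\min_\alpha\max_\beta\sup_\chi\mathcal D$. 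The CGMT bookkeeping then gives $\mathrm{plim}_{n\to\infty}\mathrm{MSE}=\mathrm{plim}_{n\to\infty}\tfrac1n\|\wv\|_2^2=\alpha_\star$, the minimizing $\alpha$; uniqueness of $\alpha_\star$ follows from strict convexity of $\alpha\mapsto\mathcal D$ along the saddle (equivalently, from a.s.\ uniqueness of the LASSO minimizer under a Gaussian design).

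\emph{Main obstacle.} The genuinely nonroutine part is the treatment of the correlation $\bSig$: unlike the i.i.d.\ case, $\|\bSig^{1/2}\uv\|_2$ cannot be collapsed into a single norm, which forces the diagonalization, the extra scalars $\chi$ and $\mu$, and the separable closed-form $\uv$-maximization that generates the $\bSig$-resolvent sum. Carrying this out while keeping the nesting of the various $\min$'s, $\max$'s and the newly introduced variational extrema consistent with the convex--concave structure that the CGMT (and Sion's theorem) require is delicate — in particular the $\sqrt t=\min_\chi\{\cdot\}$ substitution sits inside $\max_{\uv}$ and the sign of $\hv^T\wv$ must be controlled through the saddle-point structure. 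Verifying that $\mu(\alpha,\beta)$ is well defined (a unique admissible root with $\gamma_{\max}\mu<1$) and that the limiting scalar program possesses a unique saddle point $\alpha_\star$ constitutes the remaining technical load.
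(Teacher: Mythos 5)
Your overall architecture is the same as the paper's: form the error vector $\wv=\widehat\xv-\xv_0$, dualize the quadratic loss, apply the CGMT, diagonalize $\bSig$, scalarize to a deterministic program containing the resolvent sum in the $\gamma_j$, the Moreau-envelope term $e(\cdot;\cdot)$, and a secular equation for $\mu$, then transfer back to the PO. The final objects you name are the right ones. But there is a concrete gap in the step that introduces $\chi$ and generates the per-coordinate quadratic needed for the Moreau-envelope reduction. In the paper, $\chi$ does \emph{not} come from $\|\bSig^{1/2}\uv\|_2$: the $\ell_1$ term is first dualized as $\|\wv+\xv_0\|_1=\max_{\|\vv\|_\infty\le 1}(\wv+\xv_0)^T\vv$, the minimization over the \emph{direction} of $\wv$ on the sphere $\|\wv\|_2=\sqrt{n\alpha}$ produces the norm $-\sqrt{\alpha/n}\,\|\lambda\vv-\beta\hv\|_2$, and it is this norm that is squared via $\|\sv\|_2=\inf_{\chi>0}\{\chi/2+\|\sv\|_2^2/(2\chi)\}$; the resulting box-constrained separable quadratic in $\vv$ is what yields $e\bigl(x_{0,i}+\tfrac{\alpha\beta}{\chi}h_i;\tfrac{\lambda\alpha}{\chi}\bigr)$ together with the terms $-\tfrac{\chi}{2}-\tfrac{\alpha\beta^2}{2\chi}$.

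Your proposal instead attaches $\chi$ to $\|\bSig^{1/2}\uv\|_2$. That substitution replaces $\|\bSig^{1/2}\uv\|_2\,\hv^T\wv$ by $\bigl(\uv^T\bSig\uv/(2\chi)+\chi/2\bigr)\hv^T\wv$, which (i) still couples $\uv$ and $\wv$ multiplicatively, so it does not make the $\uv$-maximization separable from the $\wv$-problem; (ii) inserts a $\min_\chi$ representation inside a $\max_{\uv}$ against the sign-indefinite factor $\hv^T\wv$, which Sion's theorem alone does not license; and (iii) produces no quadratic in $w_i$, so your ``completing the square'' step has nothing to act on. Relatedly, $\mu$ is not the conjugate of $\chi$: it is the Lagrange (trust-region) multiplier of the nonconvex sphere-constrained maximization $\max_{\|\bar\uv\|_2=1}\bigl\{\tfrac{1}{\sqrt n}\beta\tilde\gv^T\bar\uv-\tfrac{\beta^2}{4}\bar\uv^T\Gammam^{-1}\bar\uv\bigr\}$ (with $\tilde\gv=\sqrt{\alpha}\gv-\Gammam^{-1/2}\zv$), whose stationarity condition is exactly the stated fixed-point equation for $\mu(\alpha,\beta)$, and whose optimal value gives $-\tfrac{\beta^2}{4}\mu+\tfrac1n\tilde\gv^T(\Gammam^{-1}-\mu\Id)^{-1}\tilde\gv$, concentrating to the resolvent sum by the trace lemma. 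To repair your argument, fix $\|\uv\|_2=\beta$ (not $\|\bSig^{1/2}\uv\|_2$), handle $\bar\uv$ by the trust-region computation, and move the $\chi$-linearization to the primal-side norm $\|\lambda\vv-\beta\hv\|_2$; the remainder of your outline, including the law-of-large-numbers step and the CGMT transfer of the set $\{\rv:|\tfrac1n\|\rv\|_2^2-\alpha_\star|<\epsilon\}$, then goes through as in the paper.
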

\begin{proof}
A proof outline of this theorem is given in Section \ref{sec:proof}.
\end{proof}
\begin{remark}\normalfont
The optimal solutions $\alpha_\star, \beta_\star, \chi_\star$ can be computed numerically by writing the first order optimality conditions, i.e., by solving $\nabla_{(\alpha,\beta,\chi)} \mathcal{D}(\alpha,\beta,\chi) =0.$
\end{remark}

\begin{remark}\normalfont
Theorem \ref{LASSO_mse} allows us to optimally tune the involved parameter such as the regularizer $\lambda$ or the number of normalized measurements $\delta$, etc.. See Fig.1 for an illustration. Note that the MSE expression in \eqref{mse_exp} requires the knowledge of the noise variance $\sigma^2$. However, even if the noise variance is
unknown, we can use some algorithm to estimatie the SNR such as in \cite{suliman2017snr}.
\end{remark}

The following Theorem precisely characterizes the support recovery metrics introduced in (\ref{supp}).
\begin{theorem}[Probability of support recovery]\label{lasso_on/off}
Under the same settings of Theorem \ref{LASSO_mse} and for any fixed $\xi>0$,
and in the limit of
$m, n,k \to \infty, m/n \to \delta$, and $k/n \to \kappa$, it holds that:
\begin{equation*}
\underset{n\to\infty}{{\rm{plim}}}\ \Phi_{\xi,\rm{on}}(\widehat{\xv}) = \mathbb{P} \biggl[\biggl | \eta \biggr(X_0 + \frac{\alpha_\star \beta_\star}{\chi_\star} Z ; \frac{ \lambda \alpha_\star }{\chi_\star}\biggl)  \biggr |   \geq \xi \biggr], 
\end{equation*}
and 
\begin{equation*}
\underset{n\to\infty}{{\rm{plim}}} \ \Phi_{\xi,\rm{off}}(\widehat{\xv}) = \mathbb{P} \biggl[ \biggl| \eta \biggr( \frac{\alpha_\star \beta_\star}{\chi_\star} Z ; \frac{ \lambda \alpha_\star }{\chi_\star} \biggl)  \biggr|   \leq \xi \biggr],
\end{equation*}
where $\eta(a;b)$ is the soft-thersholding function defined as
\begin{equation}\label{shoft_cost}
\eta(a ; b) =
\begin{cases} 
      a-b  & ,\text{if}  \ a > b \\      
					
			0 & ,\text{if} \  |a| \leq b  \\
			
      a+b  & ,\text{if} \   a < -b.
\end{cases}
\end{equation}
\end{theorem}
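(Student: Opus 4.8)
The plan is to leverage the Convex Gaussian Min-max Theorem (CGMT) machinery already set up for Theorem~\ref{LASSO_mse}. Recall that \eqref{eq:corr-LASSO} is first recast, via duality on the quadratic loss and an auxiliary Gaussian handling the left-correlation $\bSig^{1/2}$, as a min-max program to which the CGMT applies; the associated Auxiliary Optimization (AO) scalarizes to $\min_{\alpha>0}\max_{\beta>0}\sup_{\chi>0}\mathcal{D}(\alpha,\beta,\chi)$, with saddle point $(\alpha_\star,\beta_\star,\chi_\star)$ and $\alpha_\star$ equal to the asymptotic MSE. The additional ingredient needed here is that, inside the AO, the inner minimization over the LASSO variable $\xv$ is \emph{separable}: up to terms not involving $\xv$, its $i$-th coordinate minimizes $\tfrac12\big(x_i-(x_{0,i}+\tfrac{\alpha\beta}{\chi}g_i)\big)^2+\tfrac{\lambda\alpha}{\chi}|x_i|$ with $g_i$ i.i.d.\ $\mathcal{N}(0,1)$, which is exactly the expression generating the Moreau-envelope term $e(\cdot\,;\cdot)$ of \eqref{soft_cost} in $\mathcal{D}$. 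Its minimizer is the proximal operator of $\tfrac{\lambda\alpha}{\chi}|\cdot|$, i.e.\ the soft-thresholding map $\eta$ of \eqref{shoft_cost}; evaluated at the saddle point this yields the deterministic equivalent $\widehat{x}_i \approx \eta\big(x_{0,i}+\tau Z_i;\theta\big)$ with $\tau:=\alpha_\star\beta_\star/\chi_\star$, $\theta:=\lambda\alpha_\star/\chi_\star$ and $Z_i$ i.i.d.\ standard normal.

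Next I would promote this to convergence of empirical averages of test functions. Using the standard ``key lemma'' of the CGMT framework (cf.\ \cite{thrampoulidis2018precise,atitallah2017box} and its correlated-design adaptation \cite{alrashdi2020box})---which shows that near-minimizers of the AO bounded away in normalized $\ell_2$ from the soft-thresholding profile are strictly suboptimal, and transfers this back to the original problem---one obtains, for every pseudo-Lipschitz $\psi:\mathbb{R}^2\to\mathbb{R}$ of order two,
\begin{align*}
\frac{1}{k}\sum_{i\in S(\xv_0)}\psi(\widehat{x}_i,x_{0,i}) &\ \pto\ \mathbb{E}\Big[\psi\big(\eta(X_0+\tau Z;\theta),X_0\big)\Big],\\
\frac{1}{n-k}\sum_{i\notin S(\xv_0)}\psi(\widehat{x}_i,0) &\ \pto\ \mathbb{E}\Big[\psi\big(\eta(\tau Z;\theta),0\big)\Big],
\end{align*}
where $Z\sim\mathcal{N}(0,1)$, $X_0\sim p_{X_0}$, and we used that the empirical distribution of the nonzero coordinates of $\xv_0$ converges to $p_{X_0}$.

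It remains to drop the pseudo-Lipschitz restriction, since $\Phi_{\xi,\text{on}}(\widehat{\xv})$ and $\Phi_{\xi,\text{off}}(\widehat{\xv})$ are empirical averages of the \emph{discontinuous} maps $t\mapsto\mathbbm{1}_{\{|t|\ge\xi\}}$ and $t\mapsto\mathbbm{1}_{\{|t|\le\xi\}}$. For fixed $\xi>0$ and $\epsilon>0$, sandwich $\mathbbm{1}_{\{|\cdot|\ge\xi\}}$ between continuous piecewise-linear (hence Lipschitz and bounded, hence pseudo-Lipschitz) functions $f_\epsilon^-\le\mathbbm{1}_{\{|\cdot|\ge\xi\}}\le f_\epsilon^+$ agreeing with the indicator outside the $\epsilon$-neighborhood of $\{\pm\xi\}$. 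Applying the previous step to $f_\epsilon^\pm$ and then letting $\epsilon\downarrow0$, dominated convergence collapses the two bounds to $\mathbb{P}\big[|\eta(X_0+\tau Z;\theta)|\ge\xi\big]$, provided the limiting variable puts no mass on $\{\pm\xi\}$; this holds because $\xi>0$ and $\tau>0$, so $\eta(X_0+\tau Z;\theta)=\pm\xi$ would force $\tau Z$ to hit a fixed value, an event of probability zero since $Z$ has a density (the only atom of $\eta(X_0+\tau Z;\theta)$, located at $0$, is irrelevant as $\xi>0$). The same argument with $X_0\equiv0$ and the indicator $\mathbbm{1}_{\{|\cdot|\le\xi\}}$ gives $\mathbb{P}\big[|\eta(\tau Z;\theta)|\le\xi\big]$, which is precisely the asserted limit of $\Phi_{\xi,\text{off}}(\widehat{\xv})$.

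The main obstacle is the step inherited from the proof of Theorem~\ref{LASSO_mse}: upgrading the CGMT control of the \emph{optimal value} to control of the \emph{optimal solution}, i.e.\ showing that the minimizer of the random, correlated LASSO concentrates coordinate-wise (in normalized $\ell_2$) onto the soft-thresholding profile. This requires pushing the uniform strong-convexity/uniqueness arguments of the CGMT through with $\bSig$ present in the quadratic term and verifying that the scalar saddle point $(\alpha_\star,\beta_\star,\chi_\star)$ is unique and interior. Once this is in hand, the indicator discontinuity is dealt with by the routine no-atom sandwich above, and Theorem~\ref{lasso_on/off} follows.
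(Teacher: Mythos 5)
Your proposal is correct and follows essentially the same route as the paper: CGMT scalarization of the AO, identification of the AO solution's coordinates with the soft-thresholding profile $\eta(x_{0,i}+\tfrac{\alpha_\star\beta_\star}{\chi_\star}Z;\tfrac{\lambda\alpha_\star}{\chi_\star})$, and Lipschitz approximation of the discontinuous indicator (the paper invokes Lemma~A.4 of \cite{thrampoulidis2018symbol}, which is exactly your sandwich-plus-no-atom argument). The only cosmetic difference is that you read off the soft-thresholding map as the prox of the Moreau-envelope term, whereas the paper recovers it by substituting the optimal dual variable $v_i^*$ and direction $\bar{\wv}_*$ into $\widetilde{w}_i$; these are equivalent computations.
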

\begin{proof}
An overview of the proof is given in Section \ref{sec:proof}.
\end{proof}
\begin{remark} \normalfont
 It should be clear that these probabilities are taken over the randomness of $X_0$ and $Z$.
 \end{remark}

The following proposition derives a precise asymptotic characterization of the EER.
\begin{prop}[Element Error Rate]\label{LASSO_EER} \normalfont
Under the same settings of Theorem \ref{lasso_on/off} and for any fixed $\xi>0$, it holds that:
\begin{align}
\underset{n\to\infty}{{\rm{plim}}} \ {\rm{EER}}_\xi & = \mathbb{P} \left[\left| \eta \biggr(X_0 + \frac{\alpha_\star \beta_\star}{\chi_\star} Z ; \frac{ \lambda \alpha_\star }{\chi_\star}\biggl) \right|   < \xi \right] 
+ \mathbb{P} \left[ \left| \eta \biggr( \frac{\alpha_\star \beta_\star}{\chi_\star} Z ; \frac{ \lambda \alpha_\star }{\chi_\star}\biggl)  \right|   > \xi \right].
\end{align}
\end{prop}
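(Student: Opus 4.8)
The plan is to derive the EER characterization directly from Theorem~\ref{lasso_on/off} by exploiting the deterministic identity \eqref{EER_def} that links the EER to the on-support and off-support recovery functionals. The key observation is that $\Phi_{\xi,\text{on}}$, $\Phi_{\xi,\text{off}}$ and ${\rm EER}_\xi$ are all continuous (affine, in fact) transformations of one another evaluated at the same random quantity $\widehat{\xv}$, so convergence in probability is preserved under the map. Concretely, ${\rm EER}_\xi = 2 - \Phi_{\xi,\text{on}}(\widehat{\xv}) - \Phi_{\xi,\text{off}}(\widehat{\xv})$ holds \emph{pointwise} in $n$, hence taking $\mathrm{plim}$ on both sides and invoking the continuous mapping theorem (addition is continuous) gives
\begin{equation*}
\underset{n\to\infty}{{\rm{plim}}} \ {\rm{EER}}_\xi = 2 - \underset{n\to\infty}{{\rm{plim}}} \ \Phi_{\xi,\text{on}}(\widehat{\xv}) - \underset{n\to\infty}{{\rm{plim}}} \ \Phi_{\xi,\text{off}}(\widehat{\xv}).
\end{equation*}

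The remaining step is purely algebraic: substitute the two limiting probabilities from Theorem~\ref{lasso_on/off} and simplify using complementary events. Since $\mathbb{P}[\,|\eta(X_0 + \tfrac{\alpha_\star\beta_\star}{\chi_\star}Z;\tfrac{\lambda\alpha_\star}{\chi_\star})| \geq \xi\,] = 1 - \mathbb{P}[\,|\eta(X_0 + \tfrac{\alpha_\star\beta_\star}{\chi_\star}Z;\tfrac{\lambda\alpha_\star}{\chi_\star})| < \xi\,]$, and likewise for the off-support term with the inequality $\leq \xi$ complemented to $> \xi$, the constant $2$ cancels the two unit masses and one is left precisely with the claimed sum of two probabilities. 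A minor point to note is the boundary event $\{|\cdot| = \xi\}$: Theorem~\ref{lasso_on/off} states the on-support limit with $\geq \xi$ and the off-support limit with $\leq \xi$, whereas the EER in Proposition~\ref{LASSO_EER} uses strict inequalities $<\xi$ and $>\xi$; this is consistent because $|\eta(\cdot;\cdot)| = \xi$ is a probability-zero event under the absolutely continuous law of $X_0 + \tfrac{\alpha_\star\beta_\star}{\chi_\star}Z$ (or of $\tfrac{\alpha_\star\beta_\star}{\chi_\star}Z$), provided $\tfrac{\alpha_\star\beta_\star}{\chi_\star} \neq 0$, so the boundary carries no mass and strict versus non-strict inequalities agree in the limit.

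The main obstacle, such as it is, is not in this proposition at all but is inherited from Theorem~\ref{lasso_on/off}: one needs the joint convergence (or at least simultaneous convergence on the same probability space) of $\Phi_{\xi,\text{on}}(\widehat{\xv})$ and $\Phi_{\xi,\text{off}}(\widehat{\xv})$ so that their sum converges to the sum of the limits. Because each converges in probability to a \emph{deterministic} constant, this is automatic --- convergence in probability to constants is stable under continuous combination regardless of dependence --- so no extra work is required. The only genuine care needed is the zero-probability boundary argument above, which requires the standing assumption that $\alpha_\star,\beta_\star,\chi_\star$ are strictly positive (guaranteed by the setup of Theorem~\ref{LASSO_mse}) and that $p_{X_0}$ together with the Gaussian smoothing yields an atomless distribution at the relevant thresholds. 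With these in hand the proposition follows in a few lines.
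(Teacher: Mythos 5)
Your proposal is correct and follows essentially the same route as the paper, which proves the proposition in one line by combining Theorem~\ref{lasso_on/off} with the identity \eqref{EER_def}. Your additional care about the boundary event $\{|\eta(\cdot;\cdot)|=\xi\}$ (which carries no mass for $\xi>0$ since the law of the soft-thresholded Gaussian is atomless away from zero) is a detail the paper leaves implicit, but it does not change the approach.
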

\begin{proof}
The proof follows from Theorem \ref{lasso_on/off} and the definition of the EER as given by \eqref{EER_def}.
\end{proof}

In all of the above metrics, we care about the magnitude of the LASSO estimate. However, in many applications, the orientation of the solution matters as well. This is the objective of the next proposition that characterizes the cosine similarity of the LASSO. 
\begin{prop}[Cosine Similarity]\label{LASSO_similar} \normalfont
Under the same settings of Theorem \ref{LASSO_mse}, it holds that:
\begin{equation*}
\underset{n\to\infty}{{\rm{plim}}} \ \cos(\angle (\widehat{\xv},\xv_0))= \frac{\mathbb{E}_{{X_0 },{Z} } \biggr[\eta \biggr(X_0 + \frac{\alpha_\star \beta_\star}{\chi_\star} Z ; \frac{ \lambda \alpha_\star }{\chi_\star}\biggl)  X_0  \biggl]}{\sqrt{\kappa \mathbb{E}_{{X_0} ,{Z}} \biggr[\eta^2 \biggr(X_0 + \frac{\alpha_\star \beta_\star}{\chi_\star} Z ; \frac{ \lambda \alpha_\star }{\chi_\star}\biggl)  \biggl]}}.
\end{equation*}
\end{prop}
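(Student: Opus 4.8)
We outline the argument; the plan is to reuse the scalarization of the LASSO that already underlies Theorems~\ref{LASSO_mse} and \ref{lasso_on/off}, reduce the cosine similarity to three separable functionals, and conclude by the continuous mapping theorem. Write the cosine similarity as
\[
\cos\big(\angle(\widehat{\xv},\xv_0)\big)=\frac{\tfrac1n\,\widehat{\xv}^T\xv_0}{\sqrt{\tfrac1n\|\widehat{\xv}\|_2^2}\,\sqrt{\tfrac1n\|\xv_0\|_2^2}},
\]
and recall, with $c_\star:=\alpha_\star\beta_\star/\chi_\star$ and $\tau_\star:=\lambda\alpha_\star/\chi_\star$, that the CGMT analysis shows the LASSO estimate decouples coordinatewise: the empirical joint law of $\{(\widehat x_i,x_{0,i})\}_{i\in S(\xv_0)}$ converges to the law of $\big(\eta(X_0+c_\star Z;\tau_\star),X_0\big)$ with $X_0\sim p_{X_0}$ and $Z\sim\mathcal{N}(0,1)$ independent, while the empirical law of $\{\widehat x_i\}_{i\notin S(\xv_0)}$ converges to the law of $\eta(c_\star Z;\tau_\star)$. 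This is precisely the mechanism already used to prove Theorem~\ref{lasso_on/off}.

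Applying the decoupling to the test functions $(a,b)\mapsto ab$ and $(a,b)\mapsto a^2$ over the support, to $a\mapsto a^2$ over the complement, and the law of large numbers to $\tfrac1n\|\xv_0\|_2^2$ (using $k/n\to\kappa$ and that $x_{0,i}=0$ off the support), I obtain
\[
\tfrac1n\,\widehat{\xv}^T\xv_0\ \pto\ \kappa\,\mathbb{E}_{X_0,Z}\!\big[\eta(X_0+c_\star Z;\tau_\star)X_0\big],\qquad \tfrac1n\|\xv_0\|_2^2\ \pto\ \kappa\,\mathbb{E}_{X_0}[X_0^2],
\]
\[
\tfrac1n\|\widehat{\xv}\|_2^2\ \pto\ \kappa\,\mathbb{E}_{X_0,Z}\!\big[\eta^2(X_0+c_\star Z;\tau_\star)\big]+(1-\kappa)\,\mathbb{E}_{Z}\!\big[\eta^2(c_\star Z;\tau_\star)\big].
\]
Since $\mathbb{E}_{X_0}[X_0^2]>0$ and $\eta(\cdot;\tau_\star)$ is not a.s.\ zero, both limiting denominators are strictly positive, so the map $(u,v,w)\mapsto u/(\sqrt v\,\sqrt w)$ is continuous at the limit point; substituting the three limits into the displayed ratio and simplifying with the normalization of $p_{X_0}$ gives the claimed expression for $\mathrm{plim}_{n\to\infty}\cos(\angle(\widehat{\xv},\xv_0))$.

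The step requiring the most care — everything else being a direct reuse of the proof of Theorem~\ref{lasso_on/off}, which only needs bounded test functions such as $\mathbbm{1}\{|\cdot|\ge\xi\}$ — is passing to the unbounded quadratic functionals $ab$ and $a^2$. I would handle this by truncating $\eta(X_0+c_\star Z;\tau_\star)$ and $X_0$ at a level $M$, applying the bounded version of the decoupling to the truncated functionals, and letting $M\to\infty$. The truncation error vanishes by uniform integrability, which holds because $\tfrac1n\|\widehat{\xv}-\xv_0\|_2^2\to\alpha_\star<\infty$ by Theorem~\ref{LASSO_mse}, $\tfrac1n\|\xv_0\|_2^2$ is bounded in probability by the law of large numbers on the support, and $\eta(\cdot;\tau_\star)$ is $1$-Lipschitz with $\eta(0;\tau_\star)=0$, so that $\tfrac1n\|\widehat{\xv}\|_2^2$ is bounded in probability as well.
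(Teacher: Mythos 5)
Your proposal follows exactly the route the paper itself sketches --- CGMT-based asymptotic limits for $\tfrac1n\widehat{\xv}^T\xv_0$, $\tfrac1n\|\widehat{\xv}\|_2^2$ and $\tfrac1n\|\xv_0\|_2^2$ separately, followed by the continuous mapping theorem --- and your truncation/uniform-integrability treatment of the unbounded quadratic test functions correctly supplies the detail the paper omits. The only point worth making explicit is that reconciling your three limits with the stated formula relies on the paper's implicit signal normalization $\mathbb{E}[X_0^2]=\kappa$ (equivalently, unit second moment of the on-support entries, consistent with ${\rm SNR}=\kappa/\sigma^2$), which you only gesture at via ``the normalization of $p_{X_0}$''.
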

\begin{proof}
The proof is based on the CGMT to derive asymptotic predictions of the numerator and the denominator of the cosine similarity expression separetely and then use the Continuous Mapping Theorem to arrive at this proposition. Details are omitted for space limitations.
\end{proof}
\section{Numerical Results}\label{sec:simu}
To validate the provided theoretical results of the MSE as given by Theorem 1 and probability of support recovery as stated in Theorem 2, we consider the following example for the correlation matrix $\Sigmam$ \cite{shin2006capacity}: 
\begin{equation}\label{exp_corr}
\bSig(\rho) = \bigg[ \rho^{| i-j|^2} \bigg]_{i,j=1,2,\cdots,m}, \rho \in [0,1).
\end{equation}
For illustration, we focus only on the case where $\xv_0$ has entries that are sampled iid from a sparse-Bernoulli distribution  $p_{\xv_0} =(1-\kappa)\delta_0 + \kappa \delta_1$, where $\delta_{\cdot}$ is the Dirac delta function.
\figref{fig:mse_exp2} shows the MSE performance of the LASSO for different values of the regularizer $\lambda$. Monte Carlo Simulations are used to validate the theoretical prediction of Theorem 1. Comparing the simulation results to the asymptotic MSE prediction of Theorem 1 shows the close match between the two. We used $\delta =0.7, n=400, \rho =0.7, \sigma^2 = 0.01$, and $\kappa =0.1$, and the data are averaged over $500$ realizations of the channel matrix and the noise vector.

\begin{figure}
\begin{center}
\includegraphics[width =10.2cm]{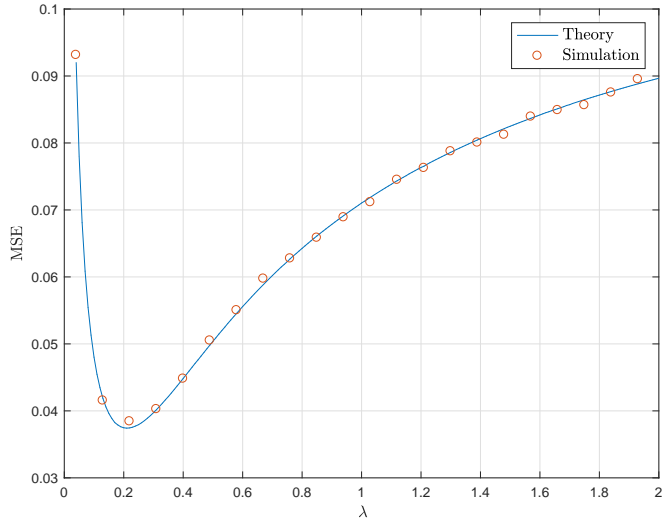}
\end{center}
\caption{\scriptsize {MSE performance of the LASSO decoder for the exponential correlation model in \eqref{exp_corr}, $\delta =0.7, n=400, \rho =0.7, \sigma^2 = 0.01$, and $\kappa =0.1$. For each $\lambda$ value, the data are averaged over $500$ independent realizations of the channel matrix, the signal vector and the noise vector.}}%
\label{fig:mse_exp2}
\end{figure}

In Fig 2 and Fig 3, we proivde the comparision between simulation and theory for the probability of successful on-support and off-support recovery respectively. We used the same values as in Fig 1. Again these figures show the preciseness of our results.

Fig 4 validates the prediction of Propostion 1 for the EER. This figure show the close agreement between simulation and Proposition 1. From this figure we can see that there is an optimal value of the regularizer $\lambda$ for which the EER is minimized.

Finally, Fig 5 shows the cosine similarity metric. As before, this figure show the precise nature of our results. As discussed earlier, we seek estimates that maximizes this measure and form this figure we can see a clear maximum value of the measure for some value of $\lambda$ around $0.14$.

 \begin{figure}
 \centering
\includegraphics[width=9cm, height =8.8cm]{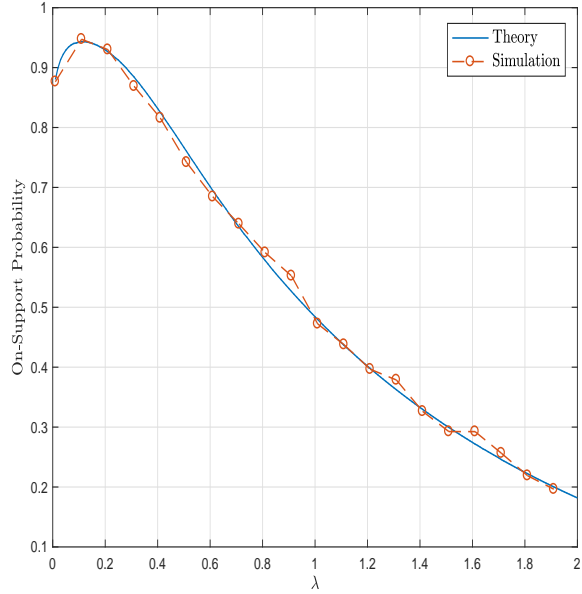}%
\caption{The probability of on-support recovery performance of the LASSO with $\xv_0$ being a sparse-Bernoulli vector. For simulations $\kappa =0.1, \delta = 0.7, n=400,  \xi = 0.001$, SNR = 10 dB.}%
\label{on_sup}
\end{figure}

 \begin{figure}
 \centering
\includegraphics[width=9cm, height =8.8cm]{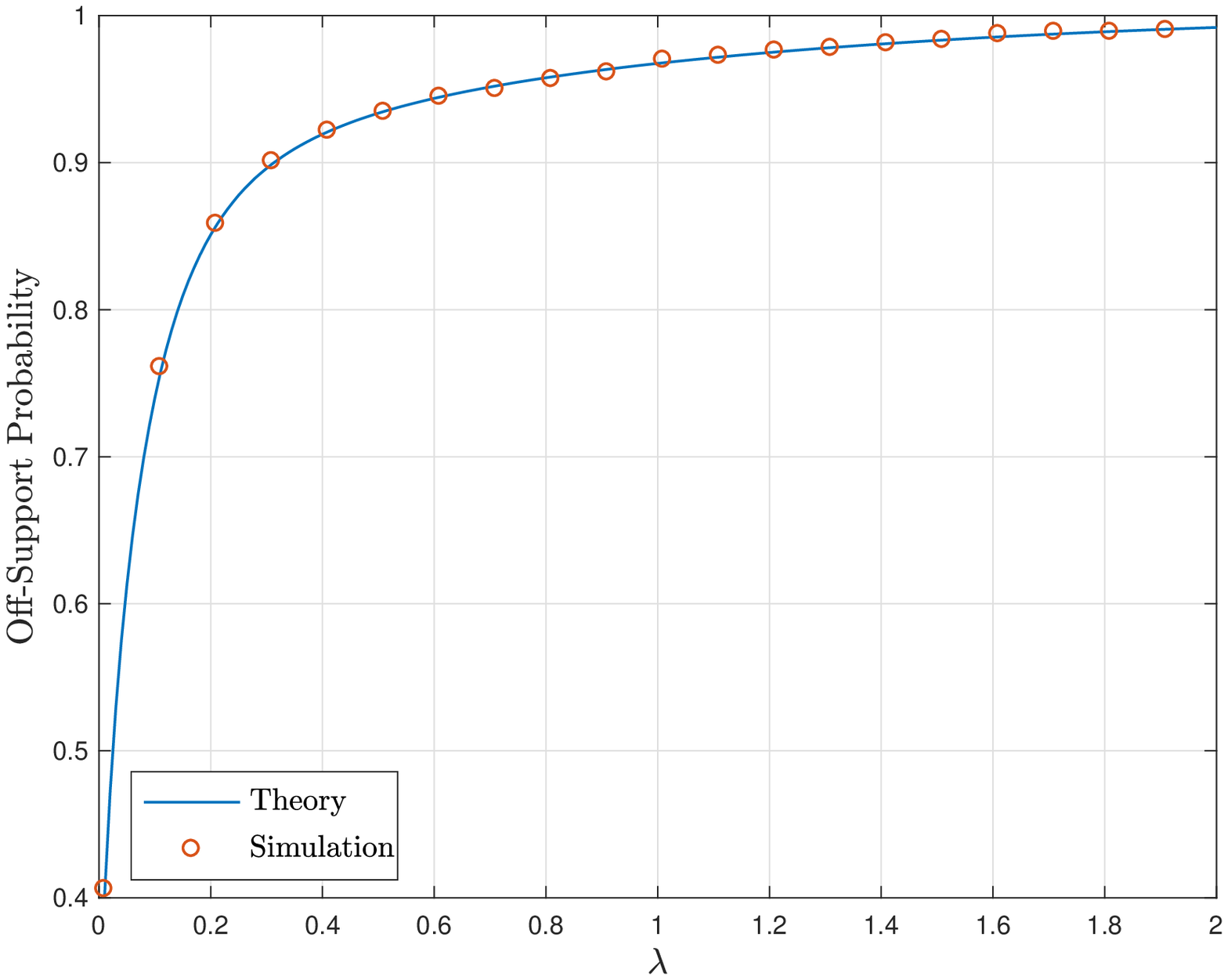}%
\caption{The probability of off-support recovery performance of the LASSO with $\xv_0$ being a sparse-Bernoulli vector. For simulations $\kappa =0.1, \delta = 0.7, n=400,  \xi = 0.001$, SNR = 10 dB.}%
\label{off_sup}
\end{figure}

\begin{figure}
\centering
\includegraphics[width=8.8cm, height =8cm]{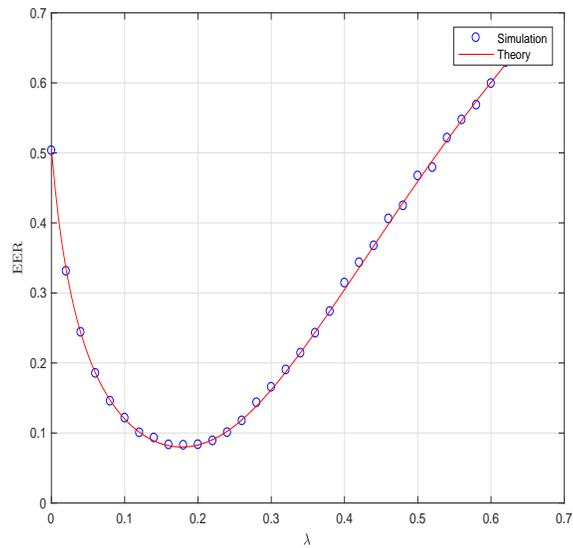}
\caption{\scriptsize {The EER performance of the LASSO. For simulations $\kappa =0.1,\delta = 0.7,n=400, \xi = 0.001$, SNR = 10 dB, and the data are averaged over 500 independent realizations of problem.}}%
\label{EER_Fig}
\end{figure}

 \begin{figure}
 \centering
\includegraphics[width=10.2cm, height =8cm]{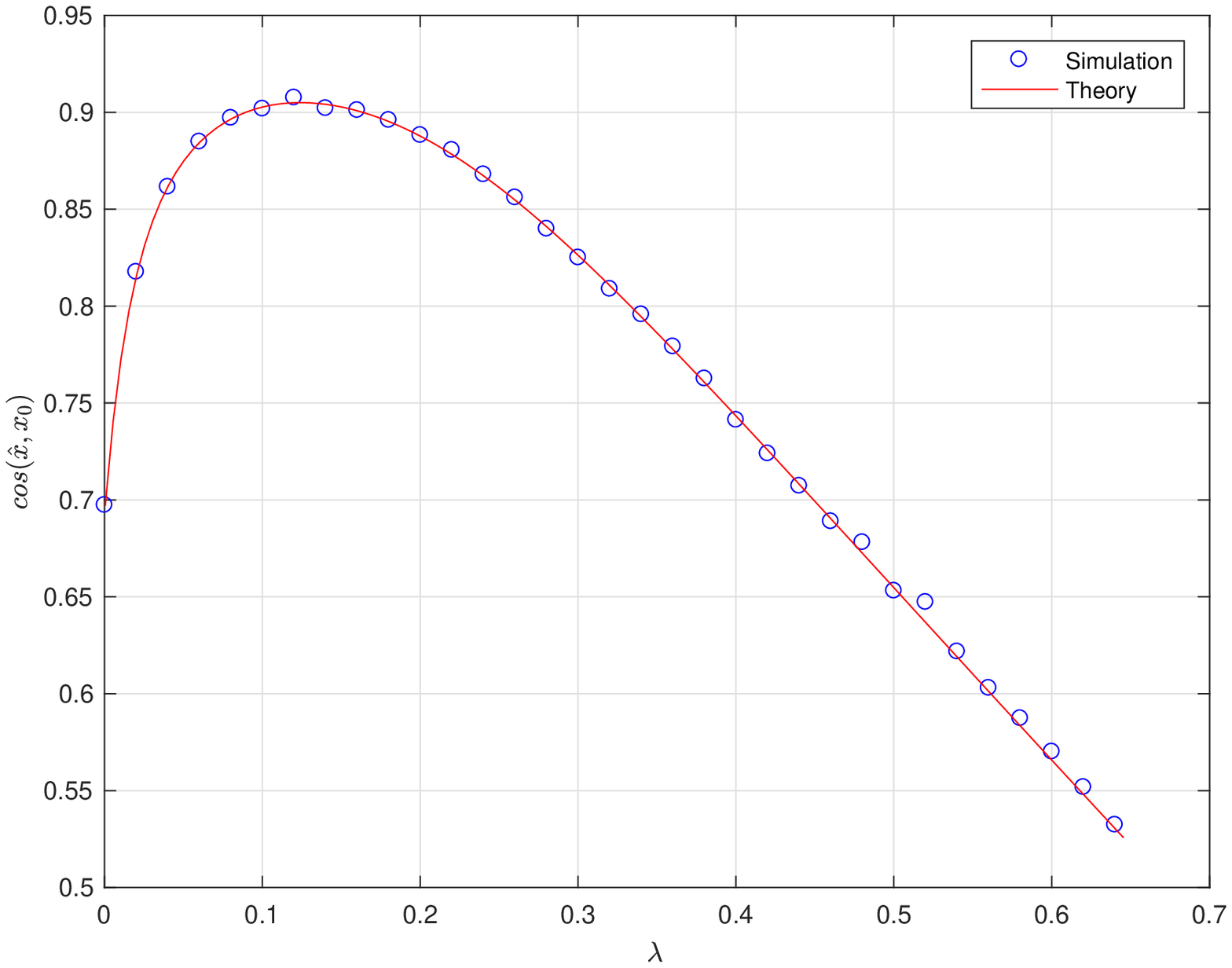}%
\caption{The cosine similarity performance of the LASSO with $\xv_0$ being a sparse-Bernoulli vector. For simulations $\kappa =0.1, \delta = 0.7,n=400$, SNR = 10 dB.}%
\label{cos_sim}
\end{figure}
\section{Approach and Proof Overview}\label{sec:proof}
In this section, we provide a proof outline of Theorems 1 and 2. The proof idea is mainly based on the framework of the CGMT which is summarized next.
\subsection{Convex Gaussian Min-max Theorem (CGMT)}
The key ingredient of the analysis is the CGMT. Here, we recall the statement of the theorem, and we refer the reader to \cite{thrampoulidis2018symbol, thrampoulidis2018precise} for the complete technical details.
Consider the following two min-max problems, which we refer to, respectively, as the Primary Optimization (PO) and Auxiliary Optimization (AO):
\begin{subequations}
\begin{align}
\label{P,AO}
&\Phi(\Cm) := \underset{\wv \in \mathcal{S}_{w}}{\operatorname{\min}}  \ \underset{\uv \in \mathcal{S}_{u}}{\operatorname{\max}} \ \uv^{T} \Cm \wv + \psi( \wv, \uv), \\
&\phi(\gv, \hv) := \underset{\wv \in \mathcal{S}_{w}}{\operatorname{\min}}  \ \underset{\uv \in \mathcal{S}_{u}}{\operatorname{\max}} \ \| \wv \| \gv^{T} \uv - \| \uv \| \hv^{T} \wv + \psi( \wv, \uv),\label{AA2} 
\end{align}
\end{subequations}
where $\Cm \in \mathbb{R}^{m \times n}, \gv \in \mathbb{R}^{m}, \hv \in \mathbb{R}^n, \mathcal{S}_w \subset \mathbb{R}^n, \mathcal{S}_u \subset \mathbb{R}^m$ and $\psi : \mathbb{R}^n \times \mathbb{R}^m \mapsto \mathbb{R}$. Denote by $\wv_{\Phi} := \wv_{\Phi}(\Cm) $ and $\wv_{\phi} := \wv_{\phi}( \gv, \hv)$ any optimal minimizers of (\ref{P,AO}) and (\ref{AA2}), respectively. Further let $\mathcal{S}_w, \mathcal{S}_u$ be convex and compact sets, $\psi(\cdot,\cdot)$ is convex-concave continuous on $\mathcal{S}_w \times \mathcal{S}_u$ and, $\Cm, \gv$ and $\hv $ all have iid standard normal entries.

Let $\mathcal{S}$ be any arbitrary open subset of $\mathcal{S}_w $, and $\mathcal{S}^c = \mathcal{S}_w \setminus\mathcal{S}$. Denote $\phi_{\mathcal{S}^c}(\gv,\hv)$ the optimal cost of the optimization in (\ref{AA2}), when the minimization over $\wv$ is constrained over $\wv \in \mathcal{S}^c$. Suppose that there exist constants $\bar{\phi}$ and $\eta >0$ such that in the limit as $n \rightarrow + \infty$, it holds with probability approaching one: (i) $\phi(\gv,\hv) \leq \bar{\phi} +\eta$, and, (ii) $\phi_{\mathcal{S}^c}(\gv,\hv) \geq \bar{\phi} + 2\eta$.
Then, $\lim_{n \rightarrow \infty} \mathbb{P}[\wv_{\phi} \in \mathcal{S}] = 1$, and $\lim_{n \rightarrow \infty} \mathbb{P}[\wv_{\Phi} \in \mathcal{S}] = 1.$ \\
\subsection{Identifying the PO and the AO}
For notational convenience, we consider the error vector $\wv := \xv- \xv_0 $,
then the problem in \eqref{eq:corr-LASSO} (after proper normalization by $n$) can be reformulated as
\begin{equation}\label{Lasso_w}
\widehat{\wv} := \argmin_{\wv} \frac{1}{n}\| \Am \wv -\zv \|_2^2 + \frac{\lambda}{n} \| \wv + \xv_0 \|_1.
\end{equation}
Using the invariance of the Gaussian distribution under orthogonal transformations, we have
\begin{equation}\label{Lasso2}
\widehat{\wv} = \argmin_{\wv} \frac{1}{n} \| \Gammam^{\frac{1}{2}} \Gm \wv -\zv \|_2^2 + \frac{\lambda }{n}\| \wv +\xv_0 \|_1,
\end{equation}
where $\Gm$ has iid Gaussian entries $\mathcal{N}(0,\frac{1}{n})$.
The loss function can be expressed in its dual form through the Fenchel conjugate as $$ \|   \Gammam^{\frac{1}{2}} \Gm \wv -\zv \|_2^2 = \max_{\uv} \sqrt{n} \uv^T (\Gammam^{\frac{1}{2}} \Gm \wv -\zv ) -\frac{n \| \uv \|_2^2}{4}.$$ 
Then, the PO can be written as
\begin{align}\label{}
\bPhi^{(n)}=&\frac{1}{n}\min_{ \wv} \max_{\uv}  \sqrt{n} \uv^T \Gammam^{\frac{1}{2}} \Gm \wv \nonumber \\
&  - \sqrt{n} \uv^T \zv -\frac{n \| \uv \|_2^2}{4} + \lambda \| \wv +\xv_0 \|_1.
\end{align}
Redefining $\uv$ as $\uv = \Gammam^{\frac{1}{2}} \uv$ yields
\begin{align}\label{PO_L}
\bPhi^{(n)}=\frac{1}{n} \min_{\wv} \max_{\uv} & \sqrt{n} \uv^T  \Gm \wv  - \sqrt{n} \uv^T \Gammam^{-\frac{1}{2}}\zv \nonumber \\
&-\frac{n }{4} \uv^T \Gammam^{-1} \uv + \lambda \| \wv +\xv_0 \|_1.
\end{align}
The above optimization is in a PO form, and its corresponding AO is
\begin{align}\label{AO_L}
\phi^{(n)}=\frac{1}{n} \min_{\wv} \max_{\uv} & \| \wv \|_2 \gv^T \uv- \| \uv \|_2 \hv^T \wv - \sqrt{n} \uv^T \Gammam^{-\frac{1}{2}}\zv \nonumber \\
  & -\frac{n }{4} \uv^T \Gammam^{-1} \uv + \lambda \| \wv +\xv_0 \|_1.
\end{align}
Recalling that $$ \| \av \|_1= \max_{\| \vv \|_{\infty} \leq 1} \av^T \vv,$$ for any vector $\av$, we have
\begin{align}\label{AO_LL}
\phi^{(n)}=\frac{1}{n}& \min_{\wv} \max_{\uv} \max_{\| \vv \|_{\infty} \leq 1} \| \wv \|_2 \gv^T \uv- \| \uv \|_2 \hv^T \wv \nonumber \\
  &- \sqrt{n} \uv^T \Gammam^{-\frac{1}{2}}\zv  -\frac{n }{4} \uv^T \Gammam^{-1} \uv + \lambda (\wv +\xv_0 )^T \vv.
\end{align}
Fixing the normalized norm of $\wv$ to $\sqrt{\alpha}= \frac{\| \wv \|_2}{\sqrt{n}}$, the AO can be expressed as
\begin{align}\label{AO_11}
 \phi^{(n)}=& \min_{\alpha\geq 0} \max_{\substack{\uv \\ \| \vv\|_\infty \leq 1}} \sqrt{\frac{\alpha}{n}} \gv^T \uv - \frac{1 }{\sqrt{n}} \uv^T \Gammam^{-\frac{1}{2}} \zv  -\frac{1 }{4} \uv^T \Gammam^{-1} \uv \nonumber \\
  &+\frac{\lambda}{n} \xv_0^T \vv +  \min_{{\| \bar{\wv} \|_2 =1}} \sqrt{\frac{\alpha}{n}} (\lambda \vv - \| \uv\|_2 \hv)^T \bar{\wv}.
\end{align}
The last minimization is easy to perform as $$\min_{{\| \bar{\wv} \|_2 =1}} \sqrt{\frac{\alpha}{n}}\big(\lambda \vv - \| \uv\|_2 \hv \big)^T \bar{\wv} = -\sqrt{\frac{\alpha}{n}} \big\| \lambda \vv - \| \uv \|_2 \hv \big\|_2,$$
with the optimal solution
\begin{equation}\label{w_*}
\bar{\wv}_* = - \frac{\lambda \vv - \| \uv \|_2 \hv}{\| \lambda \vv - \| \uv \|_2 \hv \|_2}.
\end{equation}
Then, we have the following 
\begin{align}\label{AO_12}
& \phi^{(n)}=\min_{\alpha\geq 0} \max_{\substack{\uv \\ \| \vv\|_\infty \leq 1}}  \sqrt{\frac{\alpha}{n}} \gv^T \uv - \frac{1 }{\sqrt{n}} \uv^T \Gammam^{-\frac{1}{2}} \zv  -\frac{1 }{4} \uv^T \Gammam^{-1} \uv \nonumber \\
  &+\frac{\lambda}{n} \xv_0^T \vv - \sqrt{\frac{\alpha}{n}}\big\| \lambda \vv - \| \uv \|_2 \hv \big\|_2.
\end{align}
Let $\tilde{\gv}=\sqrt{\alpha} \gv-  \Gammam^{-\frac{1}{2}} \zv$, and fixing the norm of $\uv$ to $\beta = \| \uv \|_2$, then
\begin{align}\label{AO_13}
& \phi^{(n)}=\min_{\alpha\geq 0} \max_{\substack{\beta\geq0 \\ \|\bar{\uv} \|_2 =1 \\ \| \vv\|_\infty \leq 1}}  \frac{1}{\sqrt{n}} \beta \tilde{\gv}^T \bar{\uv}  -\frac{\beta^2}{4} \bar{\uv}^T \Gammam^{-1} \bar{\uv} \nonumber \\
  &+\frac{\lambda}{n} \xv_0^T \vv - \sqrt{\frac{\alpha}{n}}\big\| \lambda \vv - \beta \hv \big\|_2.
\end{align}
Now, the optimization over $\bar{\uv}$ becomes separable, hence we need to solve the following non-convex problem
\begin{align}\label{u_1}
\max_{ \|\bar{\uv} \|_2 =1} & \frac{1}{\sqrt{n}} \beta \tilde{\gv}^T \bar{\uv}  -\frac{\beta^2}{4} \bar{\uv}^T \Gammam^{-1} \bar{\uv},
\end{align}
which is a standard optimization that has been extensively studied \cite{gander1989constrained,tao1998dc}. Its solution is $\bar{\uv}_* = \frac{2 }{\beta \sqrt{n}} \left( \Gammam^{-1} - \mu(\alpha,\beta) \Id \right)^{-1} \tilde{\gv}$, with $\mu(\alpha,\beta) $ satisfying
\begin{equation}\label{mu_eq}
\frac{1}{n}\tilde{\gv}^T \bigg( \Gammam^{-1} - \mu(\alpha,\beta) \Id \bigg)^{-2} \tilde{\gv} - \frac{\beta^2}{4} = 0.
\end{equation}
Subistituting $\bar{\uv}_* $ into \eqref{u_1} gives: $$ -\frac{\beta^2}{4} \mu(\alpha,\beta) + \frac{1}{n}\tilde{\gv}^T \bigg( \Gammam^{-1} - \mu(\alpha,\beta) \Id \bigg)^{-1} \tilde{\gv}.$$
Therefore, the AO becomes
{\small{
\begin{align}\label{AO_14}
 \phi^{(n)}=\min_{\alpha\geq 0} \max_{\substack{\beta\geq0  \\ \| \vv\|_\infty \leq 1}} &  -\frac{\beta^2}{4} \mu(\alpha,\beta) + \frac{1}{n}\tilde{\gv}^T \bigg( \Gammam^{-1} - \mu(\alpha,\beta) \Id \bigg)^{-1} \tilde{\gv} \nonumber \\
  &+\frac{\lambda}{n} \xv_0^T \vv -  \big\| \sqrt{\frac{\alpha}{n}} (\lambda \vv - \beta \hv) \big\|_2,
\end{align}}}
where $\mu(\alpha,\beta) $ satisfies \eqref{mu_eq}. We proceed by expressing the $\ell_2$-norm in \eqref{AO_14} using the following varational form
$$
\|  \sv \|_2 = \inf_{\chi >0} \frac{\chi}{2} + \frac{\|  \sv \|_2^2}{2 \chi},
$$
for any vector $\sv$.
{\small{
\begin{align}\label{AO_15}
 \phi^{(n)}=& \min_{\alpha\geq 0} \max_{\substack{\beta\geq0  \\ \| \vv\|_\infty \leq 1}} \sup_{\chi>0}  -\frac{\beta^2}{4} \mu(\alpha,\beta) + \frac{1}{n}\tilde{\gv}^T \bigg( \Gammam^{-1} - \mu(\alpha,\beta) \Id \bigg)^{-1} \tilde{\gv} \nonumber \\
  &+\frac{\lambda}{n} \xv_0^T \vv - \frac{ \chi}{2} - \frac{\alpha}{2 \chi} \big\| \frac{1}{\sqrt{n}}\big(\lambda \vv - \beta \hv \big) \big\|_2^2.
\end{align}}}
Next, we perform the optimization over $\vv$, since it is separable now. So, we need to solve:
\begin{align}\label{v_eq}
\max_{\substack{ \| \vv\|_\infty \leq 1}}  \frac{\lambda}{n} \xv_0^T \vv -\frac{\alpha}{2 \chi} \big\| \frac{1}{\sqrt{n}}\big(\lambda \vv - \beta \hv \big) \big\|_2^2.
\end{align}
This can be rewritten as
\begin{align}\label{v_eq2}
\frac{\chi}{\alpha n} \sum_{i =1}^{n}\max_{-1\leq v_i \leq 1}  \bigg(\frac{\lambda \alpha}{\chi} x_{0,i} + \frac{\alpha^2 \beta \lambda}{\chi^2} h_i\bigg)v_i - \frac{\alpha^2 \lambda^2}{2 \chi^2} v_i^2 - \frac{\alpha^2 \beta^2}{2 \chi^2} h_i^2.
\end{align}
Let $a_i = \frac{\lambda \alpha}{\chi} x_{0,i} +  \frac{\alpha^2 \beta \lambda}{\chi^2} h_i.$ Then, the optimal solution is given by:
\begin{equation}\label{v_opt}
v_i^*=\begin{cases} -1, \ \ \ \ \  \   \ \ \ {\rm if}\ \  \left(\frac{\chi}{\alpha \lambda}\right)^2 a_i<-1\\
\left(\frac{\chi}{\alpha \lambda}\right)^2 a_i,\ \ \  {\rm if}\ \ -1\leq \left(\frac{\chi}{\alpha \lambda}\right)^2 a_i \leq1\\
1,   \ \ \  \ \ \ \ \ \ \ \ \ \ {\rm if}\ \ \left(\frac{\chi}{\alpha \lambda}\right)^2 a_i>1.\end{cases}
\end{equation}
Subistituting $v_i^*$ into \eqref{v_eq2} and after some algebraic manipulations we get
\begin{align*}
\frac{\chi}{\alpha n} \bigg[ \sum_{i =1}^{n} e \left(x_{0,i} + \frac{\alpha \beta}{\chi} h_i ; \frac{\lambda \alpha}{\chi} \right)- \frac{\alpha^2 \beta^2}{2 \chi^2} h_i^2 \bigg],
\end{align*}
where $e(\cdot;\cdot)$ is as defined in \eqref{soft_cost}. Now, the AO becomes
{{
\begin{align}\label{AO_16}
  \tilde\phi^{(n)}=\min_{\alpha\geq 0} \max_{\substack{\beta\geq0 }} \sup_{\chi>0}&  -\frac{\beta^2}{4} \mu(\alpha,\beta) + \frac{1}{n}\tilde{\gv}^T \bigg( \Gammam^{-1} - \mu(\alpha,\beta) \Id \bigg)^{-1} \tilde{\gv} \nonumber \\
  & - \frac{\chi}{2} +\frac{1}{n} \bigg[ \sum_{i =1}^{n} \frac{\chi}{\alpha } e \left(x_{0,i} + \frac{\alpha \beta}{\chi} h_i ; \frac{\lambda \alpha}{\chi} \right)- \frac{\alpha \beta^2}{2 \chi} h_i^2 \bigg].
\end{align}}}
\subsection{Probabilistic Asymptotic Analysis of the AO}
Note that $\tilde \bg \sim\mathcal{N}(\boldsymbol{0},\bC_{\tilde\bg})$, where $\bC_{\tilde\bg}$ is given by
$$
\bC_{\tilde\bg}=\alpha \bI_m+\sigma^2\Gammam^{-1}.
$$
Thus, applying the trace lemma \cite{Couillet2011}, we have
{{
\begin{align*}
\frac{1}{n}\tilde{\gv}^T \bigg( \Gammam^{-1} - \mu(\alpha,\beta) \Id \bigg)^{-1} \tilde{\gv}  - \frac{1}{n}\tr\left(\bC_{\tilde\bg} \bigg( \Gammam^{-1} - \mu(\alpha,\beta) \Id \bigg)^{-1}\right)\pto 0.
\end{align*}}}
Also, using the WLLN, $\frac{1}{n} \sum_i h_i^2 \pto 1$, and for all $\alpha \geq 0, \beta>0$ and $\chi >0$, we have
$\frac{1}{n} \sum_i e \left(x_{0,i} + \frac{\alpha \beta}{\chi} h_i ; \frac{\lambda \alpha}{\chi} \right) \pto \mathbb{E}_{\underset{Z \sim \mathcal{N}(0,1)}{X_0 \sim p_{X_0}} } \biggr[e \biggr(X_0 + \frac{\alpha \beta }{ \chi} Z ; \frac{ \lambda \alpha}{\chi}\biggl) \biggr]$.
Therefore, again using Lemma 10 of \cite{thrampoulidis2018precise}, $\tilde\phi^{(n)}- \overline\phi^{(n)} \pto 0,$ where
{{
\begin{align}\label{AO_17}
\overline \phi^{(n)}=&\min_{\alpha\geq 0} \max_{\beta\geq0}\sup_{\chi >0} -\frac{\beta^2}{4} \mu(\alpha,\beta) -\frac{\chi}{2} -\frac{\alpha \beta^2}{2 \chi} \nonumber \\
+&\frac{1}{n}\sum_{j=1}^{m} \frac{\gamma_j \alpha + \sigma^2}{1 - \gamma_j  \mu(\alpha,\beta) } + \frac{\chi}{\alpha} \mathbb{E}_{\underset{Z \sim \mathcal{N}(0,1)}{X_0 \sim p_{X_0}} } \biggr[e \biggr(X_0 + \frac{\alpha \beta }{ \chi} Z ; \frac{ \lambda \alpha}{\chi}\biggl)    \biggl] ,
\end{align}
}}
where $\mu(\alpha,\beta)$ satisfies (from \eqref{mu_eq} and using the trace Lemma)
\begin{align}\label{mu_eq2}
\frac{1}{n} \sum_{j=1}^{m} \frac{\alpha + \frac{\sigma^2}{\gamma_j}}{\left(\frac{1}{\gamma_j}- \mu(\alpha,\beta)\right)^2} -\frac{\beta^2}{4} =0.
\end{align}
\subsection{Applying the CGMT}
Now we will evaluate the performance of the LASSO using the different metrics introduced earlier. We begin with the MSE analysis. 
Let $\widetilde \wv$ be the optimal solution to the AO defined as the solution to \eqref{AO_11}. 
Let $\alpha_\star$ be the optimal solution to \eqref{AO_17}. For any $\epsilon>0$, define the set:
$$
\mathcal{S}_{\epsilon} = \bigg\{ \rv: \bigg| \frac{1}{n} \| \rv \|_2^2 - \alpha_\star \biggr| < \epsilon \bigg\}.
$$
Define $\hat{\alpha}_n$ as the minimizer of \eqref{AO_11}. Then, by definition, $\hat{\alpha}_n = \frac{\| \widetilde \wv \|_2^2}{n}$. In the previous section, we showed that $\phi^{(n)} - \overline \phi^{(n)} \pto 0$. Hence, we can show that $\hat{\alpha}_n - \alpha_\star \pto 0$, which implies
$$
\bigg| \frac{1}{n} \| \widetilde \wv \|_2^2 - \alpha_\star \bigg| \pto 0.
$$
Therefore, $\widetilde\bw \in \mathcal{S}_{\epsilon} $ with probability approaching 1. Then, applying the CGMT yields that $\widehat{\wv} \in \mathcal{S}_{\epsilon}$ with probability approaching 1 as well. This completes the proof of Theorem \ref{LASSO_mse}.

We proceed now to the proof of the probabilities of support recovery. First, for the on-support recovery probability, change the set to the following:
{{
$$
\mathcal{S}_{\epsilon} = \bigg\{ \rv: \bigg| \frac{1}{k} \sum_{i \in S(\xv_0)} \mathbbm{1}_{\{| r_{i}| \geq \xi \}} - \mathbb{P} \biggl[\biggl | \eta \biggr(X_0 + \frac{\alpha_\star \beta_\star}{\chi_\star} Z ; \frac{ \lambda \alpha_\star }{\chi_\star}\biggl)  \biggr |   \geq \xi \biggr]\biggr| < \epsilon \bigg\},
$$}}
for any $\xi>0$.
Note that it can be shown, based on \eqref{w_*}, that for all $i =1,2,\cdots,n$: 
\begin{align}
\widetilde{w}_i = -\frac{\tilde{\alpha} (\lambda v_i^* -  \tilde{\beta} h_i)}{\sqrt{\frac{\tilde{\alpha}}{n}} \| \lambda v_i^* - \tilde{\beta} h_i \|_2},
\end{align}
where $\tilde{\alpha}, \tilde{\beta}$ are the solutions of \eqref{AO_16}. Note that $\sqrt{\frac{\tilde{\alpha}}{n}} \| \lambda \vv_i^* - \tilde{\beta} h_i \|_2 = \tilde{\chi}$ which is the solution of \eqref{AO_16} as well. Then
\begin{align*}
\widetilde{w}_i = -\frac{\tilde{\alpha} (\lambda v_i^* -  \tilde{\beta} h_i)}{\tilde{\chi}}.
\end{align*}
Recall that $\widetilde \wv = \widetilde \xv - \xv_0$, where $\widetilde \wv$ is the AO solution. Hence
$$
\widetilde{x}_i = \widetilde w_i + x_{0,i}. 
$$
Subistituting the values of $\widetilde w_i $ and $v_i^*$ and after some algebraic manipulations, it can be shown that $\widetilde{x}_i = \eta(x_{0,i} + \frac{\tilde\alpha \tilde\beta}{\tilde\chi} h_i ; \frac{\lambda \tilde\alpha}{\tilde\chi})$.
Since $\tilde \phi^{(n)} - \overline \phi^{(n)} \pto 0$, it can be shown that $\tilde\alpha - \alpha_\star \pto 0, \tilde\beta - \beta_\star \pto 0,$ and $\tilde\chi - \chi_\star \pto 0$. Then, after some simple calculations, it holds
$$
\bigg| \frac{1}{k} \sum_{i \in S(\xv_0)} \mathbbm{1}_{\{| \tilde x_i| \geq \xi \}} - \mathbb{P} \biggl[\biggl | \eta \biggr(X_0 + \frac{\alpha_\star \beta_\star}{\chi_\star} Z ; \frac{ \lambda \alpha_\star }{\chi_\star}\biggl)  \biggr |   \geq \xi \biggr]\biggr|  \pto 0.
$$
This proves that $\widetilde\bx \in \mathcal{S}_{\epsilon} $ with probability approaching 1.
Note that the indicator function $\mathbbm{1}_{\{|\tilde x_i|\geq \xi\}} $ is not Lipschitz, so we cannot directly apply the CGMT. However, as discussed in \cite[Lemma A.4]{thrampoulidis2018symbol} and \cite{alrashdi2020optimum}, this function can be appropriately approximated with Lipschitz functions. Therefore, we can conclude by applying the CGMT that $\widehat{\bx} \in \mathcal{S}_{\epsilon}$ with probability approaching 1, which proves the first result of Theorem \ref{lasso_on/off}. The off-support recovery probability can be derived in a similar manner and details are thus omitted.
\section{Conclusion}\label{sec:conclusion}
In this paper, we derived precise asymptotic error performance analysis of LASSO under the assumption that the design matrix has correlated entries. In particular, we derived precise expressions of the MSE, probability of support recovery, EER, and cosine similarity. Numerical simulations show the close agreement to the theory even for low dimensions of the problem. Possible future extensions inculde the double-sided correlation model, imperfect channel models and analyzing the box varient of the LASSO.
\begin{appendices}
\section*{Appendix}
The expectation in \eqref{AO_17} can be evaluated in closed form for any distribution. For example, take the case of a sparse-Bernoulli vector $\xv_0$, i.e., the entries of $\xv_0$ are sampled iid from a distribution $p_{\xv_0} =(1-\kappa)\delta_0 + \kappa \delta_1$, then 
{\small{
\begin{align*}
&\frac{\chi}{\alpha}\mathbb{E}_{\underset{Z \sim \mathcal{N}(0,1)}{X_0 \sim p_{X_0}} } \biggr[e \biggr(X_0 + \frac{\alpha \beta }{ \chi} Z ; \frac{ \lambda \alpha}{\chi}\biggl)    \biggl] = \nonumber \\
&\frac{\alpha (1- \kappa)}{\chi}\left(\frac{\beta^2}{2}+ \beta \lambda  \varphi \bigg(\frac{\lambda}{ \beta} \bigg) - (\lambda^2+\beta^2) Q \left(\frac{\lambda}{\beta} \right) \right)  \nonumber \\
&+ \kappa \left( \lambda- \frac{\alpha \lambda^2}{2 \chi} \right) Q\left(\frac{\lambda}{\beta} - \frac{\chi}{\alpha \beta} \right) - \kappa \left( \lambda + \frac{\alpha \lambda^2}{2 \chi}\right) Q \left(\frac{\lambda}{\beta} + \frac{\chi}{\alpha \beta} \right) \nonumber \\
& +\frac{\alpha \beta \lambda \kappa}{\chi } \left( \varphi  \left(\frac{\lambda}{\beta} +\frac{\chi}{\alpha \beta} \right) +  \varphi \left(\frac{\lambda}{\beta} -\frac{\chi}{\alpha \beta}  \right) \right) \nonumber \\
&- \frac{\kappa \beta}{2  \chi } \varphi  \left( \frac{\alpha \lambda + \chi}{\alpha \beta}  \right) \left( \alpha \lambda - \chi + (\alpha \lambda + \chi ) \exp \left( \frac{2 \lambda \chi}{\alpha \beta^2} \right) \right) \nonumber \\
& + \frac{\kappa}{4} \left(\frac{\alpha \beta^2}{\chi} +\frac{\chi}{ \alpha } \right)\left( {\rm erf} \left(\frac{\alpha \lambda +\chi}{\sqrt{2} \alpha \beta} \right) + {\rm erf} \left(\frac{\alpha \lambda -\chi}{\sqrt{2} \alpha \beta} \right) \right),
\end{align*}
}}
where ${\rm{erf}}(x)$ is the error function defined as ${\rm {erf}}(x) = \frac{2}{\sqrt{\pi}}\int_{0}^{x} e^{-t^2} {\rm d}  t$.

Also, for the sparse-Bernoulli distribution, we have
$$
\underset{n\to\infty}{{\rm{plim}}}\ \Phi_{\xi,\rm{on}}(\widehat{\xv})  = Q \left(\frac{\lambda}{\beta_\star} +\frac{\chi_\star(\xi +1)}{\alpha_\star \beta_\star} \right) + Q \left( \frac{\lambda}{\beta_\star} +\frac{\chi_\star(\xi -1)}{\alpha_\star \beta_\star}  \right),
$$
 
$$
\underset{n\to\infty}{{\rm{plim}}}\ \Phi_{\xi,\rm{off}}(\widehat{\xv})  = 1- 2 Q \left(\frac{\lambda}{\beta_\star} +\frac{\chi_\star \xi }{\alpha_\star \beta_\star}  \right).
$$
and
\begin{align*}
\underset{{n \to \infty}}{\rm{plim}} {\rm{EER}}_{\xi} =  Q \left(\frac{\chi_\star (1-\xi)}{\alpha_\star \beta_\star} -\frac{\lambda}{\beta_\star}\right)  
- Q \left(\frac{\chi_\star (1+\xi)}{\alpha_\star \beta_\star} 
+\frac{\lambda}{\beta_\star} \right) + 2 Q \left( \frac{\chi_\star \xi}{\alpha_\star \beta_\star} +\frac{\lambda}{\beta_\star} \right). 
\end{align*}

Finally, for the cosine similarity, we have for the sparse-Bernoulli distribution:
$$
\underset{n\to\infty}{{\rm{plim}}}\ \cos(\angle (\widehat{\xv},\xv_0)) = \frac{  I_0 }{\sqrt{\kappa(I_1 +I_2)}},
$$
where 
$$
I_0 = \frac{\kappa}{\chi_\star} \left[ \alpha_\star \beta_\star \left( \varphi \left( \frac{\chi_\star}{\alpha_\star \beta_\star} -\frac{\lambda}{\beta_\star}\right) - \varphi \left( \frac{\chi_\star}{\alpha_\star \beta_\star} +\frac{\lambda}{\beta_\star}\right)\right) + (\chi_\star - \lambda \alpha_\star ) Q \left(\frac{\lambda}{\beta_\star} -\frac{\chi_\star}{\alpha_\star \beta_\star} \right) +(\chi_\star + \lambda \alpha_\star) Q \left(\frac{\lambda}{\beta_\star} +\frac{\chi_\star}{\alpha_\star \beta_\star} \right) \right],
$$
{\tiny{
$$
I_1 = \frac{\kappa}{\chi_\star^2} \left[ \bigg(\alpha_\star^2\beta_\star^2 +(\lambda \alpha_\star-\chi_\star)^2\bigg) Q \left(\frac{\lambda}{\beta_\star} - \frac{\chi_\star }{\alpha_\star \beta_\star} \right) + \bigg(\alpha_\star^2\beta_\star^2 +(\lambda \alpha_\star + \chi_\star)^2\bigg) Q \left(\frac{\lambda}{\beta_\star} + \frac{\chi_\star }{\alpha_\star \beta_\star} \right) -  \alpha_\star \beta_\star \left( (\lambda \alpha_\star  - \chi ) \exp \left( \frac{2 \lambda \chi_\star}{\alpha_\star \beta_\star^2} \right) + \lambda \alpha_\star + \chi_\star \right) \varphi \left(\frac{\lambda}{\beta_\star} + \frac{\chi_\star }{\alpha_\star \beta_\star} \right) \right],
$$}}
and 
$$
I_2 = \frac{2 (1-\kappa) \alpha_\star^2}{\chi_\star^2} \left[ (\lambda^2 +\beta_\star^2) Q \left( \frac{\lambda}{\beta_\star}\right) - \lambda \beta_\star \varphi \left( \frac{\lambda}{\beta_\star} \right)\right].
$$

These expressions were used in Section \ref{sec:proof} for the provided numerical results.
\end{appendices}
\bibliographystyle{IEEEbib}
\bibliography{References.bib}
\end{document}